\documentclass[10pt,reqno]{amsart}
\usepackage{geometry}
\geometry{paperwidth=176mm, paperheight=250mm, textheight=189mm,
	tmargin=35mm, lmargin=30mm, rmargin=20mm, headsep=8mm,
	headheight=11.0pt, footskip=30pt, twoside=true}

\usepackage[utf8]{inputenc}

\usepackage{enumerate}
\usepackage{amstext,amsmath,amsthm,amsfonts,amssymb,amscd}
\usepackage{latexsym,mathrsfs,dsfont,euscript}
\usepackage{color}

\usepackage{hyperref} 
\hypersetup{
  colorlinks,%
  citecolor=blue,%
    filecolor=red,%
    linkcolor=red,%
    urlcolor=blue
}

\newtheorem{theorem}{Theorem}[section]
\newtheorem{proposition}[theorem]{Proposition}
\newtheorem{lemma}[theorem]{Lemma}

\theoremstyle{definition}

\theoremstyle{remark}

\numberwithin{equation}{section}

\newcommand{\abs}[1]{\left\vert#1\right\vert}

\newcommand{\proin}[2]{\left<#1,#2\right>}

\allowdisplaybreaks
\begin{document}
\title[]{Haar wavelet characterization of dyadic Lipschitz regularity}
%


\author[]{Hugo Aimar}
\author[]{Carlos Exequiel Arias}
\author[]{Ivana G\'{o}mez}
%
\subjclass[2010]{Primary. 42C15}

\keywords{wavelets, Lipschitz regularity}

\begin{abstract}
We obtain a necessary and sufficient condition on the Haar coefficients of a real function $f$ defined on $\mathbb{R}^+$ for the Lipschitz $\alpha$ regu\-larity of $f$ with respect to the ultrametric $\delta(x,y)=\inf \{\abs{I}: x, y\in I; I\in\mathcal{D}\}$, where $\mathcal{D}$ is the family of all dyadic intervals in $\mathbb{R}^+$ and $\alpha$ is positive. Precisely, $f\in \textrm{Lip}_\delta(\alpha)$ if and only if $\abs{\proin{f}{h^j_k}}\leq C 2^{-(\alpha + \tfrac{1}{2})j}$, for some constant $C$, every $j\in\mathbb{Z}$ and every $k=0,1,2,\ldots$ Here, as usual $h^j_k(x)= 2^{j/2}h(2^jx-k)$ and $h(x)=\mathcal{X}_{[0,1/2)}(x)-\mathcal{X}_{[1/2,1)}(x)$.
\end{abstract}

	\textit{To Pola Harboure and Roberto Macías}
	
	\smallskip	
	\begin{verse}
		Arde de abejas el aguaribay, arde.
		
		Ríen los ojos, los labios, hacia las islas azules
		
		a través de la cortina 
		
		de los racimos
		
		pálidos.
		
		\hspace*{5.6cm}	Juan L. Ortiz
	\end{verse}
	
	\medskip

\maketitle

\section{Introduction}\label{sec:intro}

In \cite{HolTcha91} and \cite{HolTcha90}, see also \cite{Daubechiesbook}, M.~Holschneider and  Ph.~Tchamit\-chian provide characterizations of the Lipschitz $\alpha$ regularity of a function in $L^2(\mathbb{R})$ for $0<\alpha<1$ in terms of the behaviour of the continuous wavelet transform. The result is that a given function is Lipschitz $\alpha$ if and only if its continuous wavelet transform satisfies a power law in the absolute value of the scale parameter. Here Lipschitz $\alpha$ refers to the classical definition with respect to the usual metric in $\mathbb{R}$, i.e. $\abs{f(x)-f(y)}\leq C\abs{x-y}^\alpha$ for some constant $C>0$ and every $x$ and $y$ in $\mathbb{R}$. In \cite{AiBer96} these results are extended to more general moduli of regularity of functions when the basic wavelet is the Haar wavelet. The method used in \cite{AiBer96} provides the tool for the analysis of pointwise regularity through the discrete wavelet transform associated to dyadic scaling and integer translations of the Haar wavelet. The natural Lipschitz $\alpha$ class, in our setting, is defined through the dyadic distance instead of the usual one.

The result of this paper is contained in the next statement.

\begin{theorem}\label{thm:mainresult}
	Let $f$ be a real valued function in $L^1_{\emph{loc}}(\mathbb{R}^+)$. Let $h^j_k(x)=2^{j/2} h(2^jx-k)$ where $h(x)=\mathcal{X}_{[0,1/2)}(x)-\mathcal{X}_{[1/2,1)}(x)$, $j\in\mathbb{Z}$, $k=0, 1, 2, \ldots$, and $\proin{f}{h^j_k}=\int_{\mathbb{R}^+}f(x) h^j_k(x) dx$. Let $\alpha$ be any positive number. Then, the boundedness of the sequence
	\begin{equation*}
		\left\{2^{(\alpha + \tfrac{1}{2})j} \abs{\proin{f}{h^j_k}}: j\in\mathbb{Z}, k=0,1,2,\ldots\right\}
	\end{equation*}
	is equivalent to the essential boundedness of the quotients
	\begin{equation*}
		\frac{\abs{f(x)-f(y)}}{\delta^\alpha(x,y)}, \quad x\neq y,
	\end{equation*}
	where $\delta(x,y)=\inf\{\abs{I}: x, y\in I; I\in\mathcal{D}\}$ with $\mathcal{D}$ the family of all dyadic intervals in $\mathbb{R}^+$.
\end{theorem}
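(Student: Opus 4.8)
The plan is to translate everything into the language of dyadic averages. I write $I^j_k=[k2^{-j},(k+1)2^{-j})$ for the dyadic interval supporting $h^j_k$, and for any dyadic interval $I$ I let $m_I=\tfrac{1}{\abs{I}}\int_I f$ denote the average of $f$ over $I$. Splitting $I^j_k$ into its left and right halves $I^{j+1}_{2k}$ and $I^{j+1}_{2k+1}$, a direct computation gives the fundamental identity
\[
\proin{f}{h^j_k}=2^{-j/2-1}\left(m_{I^{j+1}_{2k}}-m_{I^{j+1}_{2k+1}}\right),
\]
and hence, using $m_{I^j_k}=\tfrac12\left(m_{I^{j+1}_{2k}}+m_{I^{j+1}_{2k+1}}\right)$, that the increment of the average from a dyadic interval $I=I^j_k$ to either of its children $I'$ satisfies $\abs{m_{I'}-m_I}=2^{j/2}\abs{\proin{f}{h^j_k}}$. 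I also record the geometric reading of the ultrametric: $\delta(x,y)$ is the length of the smallest dyadic interval $I^*$ containing both $x$ and $y$, and $x,y$ then lie in distinct children of $I^*$; in particular $\delta(x,y)=\abs{I^*}$.

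For necessity, assume the quotients are essentially bounded by $C$. Fixing $I=I^j_k$ with halves $I_l,I_r$, almost every pair $(x,y)\in I_l\times I_r$ satisfies $\delta(x,y)=\abs{I}=2^{-j}$, so writing the difference of averages as a double average and applying the Lipschitz bound yields
\[
\abs{\proin{f}{h^j_k}}=2^{-j/2-1}\abs{\frac{1}{\abs{I_l}\abs{I_r}}\int_{I_l}\int_{I_r}(f(x)-f(y))\,dy\,dx}\le 2^{-j/2-1}\,C\,2^{-\alpha j},
\]
which is precisely the asserted bound $2^{(\alpha+1/2)j}\abs{\proin{f}{h^j_k}}\le C/2$.

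For sufficiency, assume $\abs{\proin{f}{h^j_k}}\le C2^{-(\alpha+1/2)j}$, so that each average increment obeys $\abs{m_{I'}-m_I}\le C2^{-\alpha j}=C\abs{I}^\alpha$. For a point $x$ let $I_i(x)$ be the generation-$i$ dyadic interval containing it; by dyadic Lebesgue differentiation, $m_{I_i(x)}\to f(x)$ as $i\to\infty$ for almost every $x$. Telescoping along the chain of nested intervals descending from the common ancestor $I^*$ of a pair of Lebesgue points $x,y$, and summing the geometric series (here the hypothesis $\alpha>0$ is used and nothing more), I obtain
\[
\abs{f(x)-m_{I^*}}\le\sum_{i=J}^{\infty}C2^{-\alpha i}=\frac{C}{1-2^{-\alpha}}\,2^{-\alpha J}=C'\,\delta(x,y)^\alpha,
\]
where $\abs{I^*}=2^{-J}=\delta(x,y)$, together with the same bound for $y$. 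The triangle inequality then gives $\abs{f(x)-f(y)}\le 2C'\,\delta(x,y)^\alpha$ for almost every pair, i.e. the essential boundedness of the quotients.

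The only genuinely delicate point is the reliance, in the sufficiency direction, on the a.e.\ convergence $m_{I_i(x)}\to f(x)$; since $f$ is merely in $L^1_{\mathrm{loc}}(\mathbb{R}^+)$ I would justify this through the Lebesgue differentiation theorem along the dyadic filtration (equivalently, dyadic martingale convergence), which secures the limit at almost every point. I would emphasize that, because the balls of the ultrametric $\delta$ are exactly the dyadic intervals, the telescoping estimate converges for \emph{every} positive $\alpha$, so no restriction such as $\alpha<1$ is needed; this is the feature that distinguishes the dyadic setting from the Euclidean one, where Lipschitz regularity of order greater than one is trivial.
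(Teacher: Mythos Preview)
Your proof is correct and follows essentially the same strategy as the paper: the key identity linking $\proin{f}{h_I}$ to the difference of averages over the two halves of $I$, a telescoping of dyadic averages from the smallest common ancestor down to the points, and Lebesgue (dyadic) differentiation to pass to $f(x)$ and $f(y)$. The only cosmetic differences are that in the necessity direction you average over $I_l\times I_r$ whereas the paper subtracts $f(a_I)$ at a single point, and in the sufficiency direction you telescope both chains to the common ancestor $m_{I^*}$ while the paper routes through the two children $m_{I^-},m_{I^+}$ with a separate ``middle'' term; neither change affects the argument.
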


In Section~\ref{sec:dyadicdistanceHaarsystem} we introduce the basic facts and notation and Section~\ref{sec:proofmainresult} is devoted to the proof of Theorem~\ref{thm:mainresult}.

\section{Dyadic distance in $\mathbb{R}^+$ and the Haar system}\label{sec:dyadicdistanceHaarsystem}

The set of nonnegative real numbers is denoted here by $\mathbb{R}^+$. The family of all dyadic intervals in $\mathbb{R}^+$ is the disjoint union of the classes $\mathcal{D}^j$, $j\in\mathbb{Z}$, where $\mathcal{D}^j=\{I^j_k=[k2^{-j},(k+1)2^{-j}): k=0,1,2,\ldots\}$ are the dyadic intervals of level $j$. Notice that with this notation, when $j$ grows the partitions of $\mathbb{R}^+$ get refined and the intervals smaller. Since given two points $x$ and $y$ of $\mathbb{R}^+$ there exists some $j_0\in\mathbb{Z}$ such that $0\leq \max \{x,y\}<2^{-j_0}$, we have that $x, y\in I^{j_0}_0$. Hence, the class of all dyadic intervals $I\in\mathcal{D}$ such that $x$ and $y$, both, belong to $I$, is non-empty. So that, if $\abs{E}$ denotes the Lebesgue length of the measurable set $E$, we have that
\begin{equation*}
	\delta(x,y)=\inf \left\{\abs{I}: x, y\in I; I\in\mathcal{D}\right\}
\end{equation*}
is a well defined nonnegative real number. Even more, $\delta$ is an ultrametric in $\mathbb{R}^+$. In other words,
\begin{enumerate}[(i)]
	\item $\delta(x,y)=0$ if and only if $x=y$;
	\item $\delta(x,y)=\delta(y,x)$, for every $x$ and every $y$ both in $\mathbb{R}^+$;
	\item $\delta(x,z)\leq \max\{\delta(x,y),\delta(y,z)\}$ for every $x, y, z$ in $\mathbb{R}^+$.
\end{enumerate}
The triangle inequality follows from the properties of the family $\mathcal{D}$. In fact, given $x, y$ and $z$ in $\mathbb{R}^+$, let $I(x,y)$ and $I(y,z)$ denote the smallest dyadic intervals containing $x, y$ and $y, z$ respectively, then, one of these intervals contains the other because $y\in I(x,y)\cap I(y,z)\neq \emptyset$. Assume $I(x,y)\supseteq I(y,z)$, then $\delta(x,z)\leq \abs{I(x,y)}=\max \{\abs{I(y,z)},\abs{I(x,y)}\}=\max\{\delta(y,z),\delta(x,y)\}$.
In particular, $\delta$ is a metric in $\mathbb{R}^+$. Notice that $\abs{x-y}\leq \delta(x,y)$, but $\frac{\delta(x,y)}{\abs{x-y}}$ is unbounded. Hence every Lipschitz$(\alpha)$ function $f$ in the usual sense $(\abs{f(x)-f(y)}\leq C\abs{x-y}^\alpha)$ is also a $\textrm{Lip}_\delta(\alpha)$ function, i.e.
\begin{equation*}
	\abs{f(x)-f(y)}\leq C\delta^\alpha(x,y)
\end{equation*}
for some constant $C$ and every $x$ and $y$ in $\mathbb{R}^+$. On the other hand, there are $\textrm{Lip}_\delta(\alpha)$ functions which are not Lipschitz$(\alpha)$ in the classical sense. In fact, $\mathcal{X}_I$, $I\in\mathcal{D}$, is in the class $\textrm{Lip}_\delta(1)$. We also observe that in contrast with the class Lipschitz$(\alpha)$ for every $\alpha>1$, which is trivial, there exist non constant $\textrm{Lip}_\delta(\alpha)$ functions for every $\alpha>0$.

Let us now review the basic properties of the Haar system. Set $h^0_0(x)=\mathcal{X}_{[0,1/2)}(x) -\mathcal{X}_{[1/2,1)}(x)$ and $h^j_k(x)=2^{j/2}h^0_0(2^jx-k)$ for $j\in\mathbb{Z}$ and $k=0, 1, 2,\ldots$ The family $\mathscr{H}=\{h^j_k: j\in\mathbb{Z}, k=0,1,2,\ldots\}$ is the Haar system in $\mathbb{R}^+$. It is well known that $\mathscr{H}$ is an orthonormal basis for $L^2(\mathbb{R}^+)$. Since for each $I\in\mathcal{D}$ there is one and only one $h\in\mathscr{H}$ supported in $I$, we write sometimes $h_I$ to denote  the $h\in\mathscr{H}$ supported in $I\in\mathcal{D}$ and sometimes $I_h$ to denote the dyadic support of $h\in\mathscr{H}$. From the basic character of $\mathscr{H}$ in $L^2(\mathbb{R}^+)$ we have that, given $f\in L^2(\mathbb{R}^+)$,
\begin{equation*}
	f = \sum_{h\in\mathscr{H}} \proin{f}{h}h,
\end{equation*}
in the $L^2(\mathbb{R}^+)$-sense, with $\proin{f}{h}=\int_{\mathbb{R}^+}f(x) h(x) dx$. The sequence of coefficients $\{\proin{f}{h}: h\in\mathscr{H}\}$ is well defined even for functions in $L^1_{\textrm{loc}}(\mathbb{R}^+)$, since the Haar functions are bounded and have bounded support.

\section{Proof of Theorem~\ref{thm:mainresult}}\label{sec:proofmainresult}
The easy part of Theorem~\ref{thm:mainresult} follows as usual from the vanishing of the mean of the Haar functions. Let us state and prove it.

\begin{proposition}\label{propo:fLipdeltaimpliescoefficientsbounded}
	Let $f\in\textrm{Lip}_\delta(\alpha)$, $\alpha >0$. Set $[f]_{\textrm{Lip}_\delta(\alpha)}$ to denote the infimum of the constants $C>0$ such that $\abs{f(x)-f(y)}\leq C\delta^\alpha(x,y)$, $x, y\in\mathbb{R}^+$. Then $\abs{\proin{f}{h_I}}\leq [f]_{\textrm{Lip}_\delta(\alpha)} \abs{I}^{\alpha+\tfrac{1}{2}}$ for every $I\in\mathcal{D}$.
\end{proposition}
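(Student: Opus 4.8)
The plan is to deduce the estimate from the single structural property that distinguishes the Haar functions, namely that each $h_I$ has vanishing integral, combined with the elementary observation that any two points lying in a common dyadic interval $I$ are at dyadic distance at most $\abs{I}$. Together these reduce the bound to a one-line computation, so the work is essentially in choosing the right constant to subtract and computing the $L^1$-norm of $h_I$ exactly.

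First I would fix $I\in\mathcal{D}$ and exploit $\int_{\mathbb{R}^+}h_I(x)\,dx=0$: subtracting any constant from $f$ does not change the coefficient, so picking the value $f(y_0)$ at a fixed point $y_0\in I$ gives
\[
\proin{f}{h_I}=\int_I\bigl(f(x)-f(y_0)\bigr)h_I(x)\,dx .
\]
Now, for every $x\in I$ the interval $I$ itself contains both $x$ and $y_0$, so by definition of the infimum $\delta(x,y_0)\le\abs{I}$, and the $\textrm{Lip}_\delta(\alpha)$ hypothesis yields the uniform pointwise bound $\abs{f(x)-f(y_0)}\le[f]_{\textrm{Lip}_\delta(\alpha)}\,\delta^\alpha(x,y_0)\le[f]_{\textrm{Lip}_\delta(\alpha)}\,\abs{I}^\alpha$ for all $x\in I$. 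Inserting this under the integral and using that $\abs{h_I}$ equals the constant $\abs{I}^{-1/2}$ on its support $I$, so that $\int_I\abs{h_I(x)}\,dx=\abs{I}^{-1/2}\abs{I}=\abs{I}^{1/2}$, I obtain
\[
\abs{\proin{f}{h_I}}\le\int_I\abs{f(x)-f(y_0)}\,\abs{h_I(x)}\,dx\le[f]_{\textrm{Lip}_\delta(\alpha)}\,\abs{I}^\alpha\cdot\abs{I}^{1/2},
\]
which is exactly the claimed inequality.

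There is no genuine obstacle in this direction; the only point that must be handled with care is keeping the constant sharp, which requires the exact value $\abs{I}^{1/2}$ of $\norm{h_I}_{L^1}$ together with the sharp bound $\delta(x,y_0)\le\abs{I}$ rather than any cruder estimate. I note that the full ultrametric strength of $\delta$ is not used here—only that every point of a dyadic interval $I$ is within dyadic distance $\abs{I}$ of any other—so the substantive difficulty should lie entirely in the converse implication, where one must reconstruct the regularity of $f$ from the decay of its Haar coefficients.
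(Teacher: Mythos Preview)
Your argument is correct and is essentially the paper's own proof: both subtract a constant value of $f$ at a point of $I$ (the paper uses the left endpoint $a_I$, you use an arbitrary $y_0\in I$), invoke the vanishing mean of $h_I$, bound $\delta(x,y_0)\le\abs{I}$ on $I$, and use $\abs{h_I}=\abs{I}^{-1/2}$ on its support to obtain the claimed estimate.
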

\begin{proof}
	For $I=[a_I,b_I)\in\mathcal{D}$ we have $\int_{\mathbb{R}^+}h_I(x) dx=0$, hence
	\begin{align*}
		\abs{\proin{f}{h_I}} &= \abs{\int_{\mathbb{R}^+} f(x)h_I(x) dx}\\
		&= \abs{\int_{\mathbb{R}^+}(f(x) -f(a_I))h_I(x) dx}\\
		&\leq \int_{I}\abs{f(x) -f(a_I)}\abs{h_I(x)} dx\\
		&\leq [f]_{\textrm{Lip}_\delta(\alpha)} \int_I \delta^{\alpha}(x,a_I)\abs{I}^{-\tfrac{1}{2}} dx\\
		&\leq [f]_{\textrm{Lip}_\delta(\alpha)}\abs{I}^{\alpha-\tfrac{1}{2}}\int_I dx\\
		&= [f]_{\textrm{Lip}_\delta(\alpha)}\abs{I}^{\alpha-\tfrac{1}{2}+1}\\
		&= [f]_{\textrm{Lip}_\delta(\alpha)}\abs{I}^{\alpha+\tfrac{1}{2}}.
	\end{align*}
\end{proof}

In order to prove that the size of the coefficients guarantee the regularity of $f$ we start by stating and proving a lemma. Given an interval $I\in\mathcal{D}$ we denote with $I^-$ and $I^+$ its left and right halves respectively. Notice that when $I\in\mathcal{D}^j$ then $I^-$ and $I^+$ both belong to $\mathcal{D}^{j+1}$.
Given a locally integrable  function $f$ we write $m_I(f)$ to denote the mean value of $f$ on $I\in\mathcal{D}$. In other words $m_I(f)=\tfrac{1}{\abs{I}}\int_I f(x) dx$.

\begin{lemma}\label{lemma:oscilationmI}
	Let $f\in L^1_{\emph{loc}}(\mathbb{R}^+)$. Then, for every $I\in\mathcal{D}$ we have 
	\begin{equation*}
		\abs{m_{I^-}(f)- m_{I^+}(f)} = 2\abs{I}^{-\tfrac{1}{2}} \abs{\proin{f}{h_I}}.
	\end{equation*}
\end{lemma}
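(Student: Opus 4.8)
The plan is to make the identity completely explicit by writing the single Haar function $h_I$ attached to $I$ in terms of the characteristic functions of its two halves, and then to read off the inner product as a normalized difference of averages. Concretely, if $I=I^j_k$ so that $\abs{I}=2^{-j}$, then the normalization $2^{j/2}=\abs{I}^{-1/2}$ together with the definition $h(x)=\mathcal{X}_{[0,1/2)}(x)-\mathcal{X}_{[1/2,1)}(x)$ gives
\[
	h_I=\abs{I}^{-\tfrac{1}{2}}\left(\mathcal{X}_{I^-}-\mathcal{X}_{I^+}\right),
\]
since the rescaling $x\mapsto 2^jx-k$ maps $I^-$ onto $[0,1/2)$ and $I^+$ onto $[1/2,1)$. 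This is the one point where care is needed: checking that the dilation-translation in $h^j_k$ places the value $+\abs{I}^{-1/2}$ on the left half $I^-$ and the value $-\abs{I}^{-1/2}$ on the right half $I^+$, and that the prefactor is exactly $\abs{I}^{-1/2}$.

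First I would substitute this expression into $\proin{f}{h_I}=\int_{\mathbb{R}^+} f\, h_I$, which is legitimate for $f\in L^1_{\mathrm{loc}}(\mathbb{R}^+)$ because $h_I$ is bounded with bounded support, obtaining
\[
	\proin{f}{h_I}=\abs{I}^{-\tfrac{1}{2}}\left(\int_{I^-}f(x)\,dx-\int_{I^+}f(x)\,dx\right).
\]
Next I would convert the two integrals into averages using $\abs{I^-}=\abs{I^+}=\abs{I}/2$, so that $\int_{I^\pm}f=\tfrac{\abs{I}}{2}\,m_{I^\pm}(f)$. Substituting yields
\[
	\proin{f}{h_I}=\frac{\abs{I}^{\tfrac{1}{2}}}{2}\left(m_{I^-}(f)-m_{I^+}(f)\right).
\]

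Finally, taking absolute values and solving for the oscillation gives the claimed identity $\abs{m_{I^-}(f)-m_{I^+}(f)}=2\abs{I}^{-\tfrac{1}{2}}\abs{\proin{f}{h_I}}$. I expect essentially no obstacle here: the lemma is a direct algebraic consequence of the explicit two-valued form of the Haar function on $I$, and the only thing worth double-checking is the bookkeeping of the normalizing constants $\abs{I}^{\pm 1/2}$ and the factor $\tfrac{1}{2}$ coming from $\abs{I^\pm}=\abs{I}/2$.
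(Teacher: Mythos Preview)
Your proof is correct and is essentially the same computation as the paper's: both rely on the explicit form $h_I=\abs{I}^{-1/2}(\mathcal{X}_{I^-}-\mathcal{X}_{I^+})$ together with $\abs{I^\pm}=\abs{I}/2$, the only difference being that you start from $\proin{f}{h_I}$ and arrive at the difference of means, while the paper starts from the difference of means and recognizes $\proin{f}{h_I}$.
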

\begin{proof}
	Let $I\in\mathcal{D}$ be given, then
	\begin{align*}
			\abs{m_{I^-}(f)- m_{I^+}(f)} &= \abs{\frac{2}{\abs{I}}\int_{I^-} f(x) dx-\frac{2}{\abs{I}}\int_{I^+} f(x) dx}\\
			&= 2\abs{I}^{-\tfrac{1}{2}}\abs{\int_I \abs{I}^{-\tfrac{1}{2}}\left(\mathcal{X}_{I^-}(x) -\mathcal{X}_{I^+}(x)\right) f(x)dx} \\
			&= 2\abs{I}^{-\tfrac{1}{2}} \left(\int_{\mathbb{R}^+} h_I(x) f(x) dx\right)\\
			&= 2\abs{I}^{-\tfrac{1}{2}} \abs{\proin{f}{h_I}}.
	\end{align*}
\end{proof}
\begin{proposition}\label{propo:coefficientsboundedimpliesfLipdelta}
	Let $f\in L^1_{\emph{loc}}(\mathbb{R}^+)$ be such that for some constant $A>0$ we have
	\begin{equation*}
		\abs{\proin{f}{h_I}}\leq A \abs{I}^{\alpha+\tfrac{1}{2}}
	\end{equation*}
for every $I\in\mathcal{D}$, then $f\in \textrm{Lip}_\delta(\alpha)$ and $[f]_{\textrm{Lip}_\delta(\alpha)}\leq C_\alpha A$ with $C_\alpha= \sup\{2,\frac{1}{2^\alpha -1}\}$.
\end{proposition}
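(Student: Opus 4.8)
The plan is to derive the pointwise regularity of $f$ from the control that the hypothesis exerts, through Lemma~\ref{lemma:oscilationmI}, on the increments of the dyadic averages of $f$ across scales. The analytic input is the Lebesgue differentiation theorem along the dyadic filtration: for $f\in L^1_{\text{loc}}(\mathbb{R}^+)$ and almost every $x$, writing $I_j(x)$ for the interval of $\mathcal{D}^j$ that contains $x$, one has $m_{I_j(x)}(f)\to f(x)$ as $j\to\infty$. I would establish the desired inequality for every pair of such Lebesgue points $x\neq y$; since the excluded pairs form a null set, this delivers the essential bound on the quotient required by Theorem~\ref{thm:mainresult}.

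First I would isolate the one increment estimate that does all the work. If $J\in\mathcal{D}$ and $J'$ is the half of $J$ containing a prescribed point, then $m_{J'}(f)-m_J(f)=\pm\tfrac12(m_{J^-}(f)-m_{J^+}(f))$, so Lemma~\ref{lemma:oscilationmI} together with the hypothesis gives
\begin{equation*}
	\abs{m_{J'}(f)-m_J(f)}=\abs{J}^{-1/2}\abs{\proin{f}{h_J}}\leq A\abs{J}^{\alpha}.
\end{equation*}
This converts the coefficient bound into a geometric decay of the average-increments as the scale refines.

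Then I would telescope. Let $I(x,y)$ be the smallest dyadic interval containing both $x$ and $y$, with $\abs{I(x,y)}=\delta(x,y)=2^{-j_0}$; by minimality $x$ and $y$ lie in its two distinct halves, so $I_{j_0}(x)=I_{j_0}(y)=I(x,y)$ while $I_{j_0+1}(x)\neq I_{j_0+1}(y)$. Expanding $f(x)-m_{I_{j_0+1}(x)}(f)=\sum_{j\geq j_0+1}(m_{I_{j+1}(x)}(f)-m_{I_j(x)}(f))$ and bounding each term by the increment estimate (with $\abs{I_j(x)}=2^{-j}$) would yield
\begin{equation*}
	\abs{f(x)-m_{I_{j_0+1}(x)}(f)}\leq A\sum_{j\geq j_0+1}2^{-j\alpha}=\frac{A}{2^\alpha-1}\,\delta^\alpha(x,y),
\end{equation*}
and symmetrically for $y$. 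The middle jump is exactly $\abs{m_{I_{j_0+1}(x)}(f)-m_{I_{j_0+1}(y)}(f)}=2\abs{I(x,y)}^{-1/2}\abs{\proin{f}{h_{I(x,y)}}}\leq 2A\,\delta^\alpha(x,y)$ by the Lemma. Adding the two tails and the jump through the triangle inequality would give $\abs{f(x)-f(y)}\leq\bigl(2+\tfrac{2}{2^\alpha-1}\bigr)A\,\delta^\alpha(x,y)$, so that $f\in\textrm{Lip}_\delta(\alpha)$.

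The step I expect to be the real obstacle is pinning down the sharp constant. The decomposition above produces $2+\tfrac{2}{2^\alpha-1}$, where the summand $2$ comes from the single Haar coefficient attached to $I(x,y)$ and each tail contributes $\tfrac{1}{2^\alpha-1}$; moreover this value appears attainable by saturating the coefficients along the two dyadic rays descending to $x$ and to $y$ with compatible signs, which suggests it cannot be lowered by this decomposition alone. Reaching the stated $C_\alpha=\sup\{2,\tfrac1{2^\alpha-1}\}$ therefore seems to demand a genuinely different bookkeeping---for instance separating the regimes $2^\alpha-1\gtrless\tfrac12$ and arguing that in each the root term or the tails may be absorbed into a maximum rather than a sum---and this is the point I would scrutinize most. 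The only other care needed is measure-theoretic: taking $x,y$ in the full-measure set where the dyadic averages converge, so that the final estimate is indeed a bound on the essential quotient $\abs{f(x)-f(y)}/\delta^\alpha(x,y)$.
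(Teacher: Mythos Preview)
Your approach is essentially identical to the paper's: the paper also fixes the minimal dyadic $I$ with $x\in I^-$, $y\in I^+$, builds the two descending chains $I^x_l$, $I^y_l$, telescopes $f(x)-f(y)$ into a central jump (their term~$III$) plus two tails (their $II$ and $IV$), bounds each piece via Lemma~\ref{lemma:oscilationmI} and the hypothesis, and finally sends the endpoints to $f(x)$, $f(y)$ by Lebesgue differentiation. The bounds obtained there are exactly yours: $III\le 2A\,\delta^\alpha(x,y)$ and $II,\,IV\le \frac{A}{2^\alpha-1}\,\delta^\alpha(x,y)$.

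Your misgiving about the constant is well placed and is not a defect of your argument. The paper's own decomposition yields the \emph{same} three pieces with the \emph{same} bounds, hence the sum $II+III+IV\le\bigl(2+\tfrac{2}{2^\alpha-1}\bigr)A\,\delta^\alpha(x,y)$; nevertheless the paper then asserts this sum is $\le A\,C_\alpha\,\delta^\alpha(x,y)$ with $C_\alpha=\sup\{2,\tfrac{1}{2^\alpha-1}\}$, which does not follow (for instance at $\alpha=1$ one has $C_\alpha=2$ while $2+\tfrac{2}{2^\alpha-1}=4$). So the ``obstacle'' you anticipate is in fact an arithmetic slip in the stated constant, not a missing idea: the qualitative conclusion $f\in\textrm{Lip}_\delta(\alpha)$ is fully proved by your argument, and the clean constant delivered by this decomposition is $2+\tfrac{2}{2^\alpha-1}$ (equivalently $\tfrac{2^{\alpha+1}}{2^\alpha-1}$), not the maximum of the two summands.
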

\begin{proof}
	 Let $x<y$ be two points in $\mathbb{R}^+$. Let $I\in\mathcal{D}$ be the smallest dyadic interval containing $x$ and $y$. In other words $\abs{I}=\delta(x,y)$. Since $x<y$, necessarily $x\in I^-$ and $y\in I^+$. Set $I^x_1= I^-$ and $I^y_1=I^+$. Now let $I^x_2$ be the half of $I^x_1$ to which $x$ belongs, and $I^y_2$ the half of $I^y_1$ with $y\in I^y_2$. In general, once $I^x_l$ and $I^y_l$ are defined, we select $I^x_{l+1}$ as the only half of $I^x_l$ with $x\in I^x_{l+1}$ and $I^y_{l+1}$ as the only half of $I^y_l$ with $y\in I^y_{l+1}$. In this way for a fixed positive integer $k$ we have
	 \begin{equation*}
	 	I^x_k\subset I^{x}_{k-1}\subset \cdots\subset I^x_2\subset I^x_1\subset I,
	 \end{equation*} 
 and 
 \begin{equation*}
 	I^y_k\subset I^{y}_{k-1}\subset \cdots\subset I^y_2\subset I^y_1\subset I.
 \end{equation*} 
Hence
\begin{align*}
	f(x) - f(y) &= \left(f(x) - m_{I^x_k}(f)\right) +\\ 
	&\ + \left(m_{I^x_k}(f) - m_{I^x_{k-1}}(f)\right) + \cdots +
	\left(m_{I^x_2}(f) - m_{I^x_{1}}(f)\right) +\\
	&\ + \left(m_{I^x_1}(f) - m_{I^y_{1}}(f)\right) +\\
	&\ + \left(m_{I^y_1}(f) - m_{I^y_{2}}(f)\right) + \cdots 
	+ \left(m_{I^y_{k-1}}(f) - m_{I^y_{k}}(f)\right) \\
	&\ + \left(m_{I^y_k}(f) - f(y)\right).
\end{align*}
Then 
\begin{align*}
	\abs{f(x)-f(y)} &\leq \abs{f(x)-m_{I^x_k}(f)}+\\
	& + \sum_{l=2}^k\abs{m_{I^x_l}(f)-m_{I^x_{l-1}}(f)} +\\
	& + \abs{m_{I^x_1}(f)-m_{I^y_{1}}(f)}+\\
	& + \sum_{l=1}^{k-1}\abs{m_{I^y_l}(f)-m_{I^y_{l+1}}(f)} + \\
	& + \abs{m_{I^y_k}(f)-f(x))}\\
	&= I + II + III + IV + V.
\end{align*}
Let us start by bounding the central term $III$. Notice that $I^x_1=I^-$ and $I^y_1= I^+$, with $\abs{I}=\delta(x,y)$. Then by Lemma~\ref{lemma:oscilationmI},
\begin{align*}
	III &= \abs{m_{I^x_1}(f)-m_{I^y_{1}}(f)}\\
	&= \abs{m_{I^-}(f)-m_{I^+}(f)}\\
	&= 2 \abs{I}^{-\tfrac{1}{2}} \abs{\proin{f}{h_I}}\\
	&\leq 2 A \abs{I}^{-\tfrac{1}{2}} \abs{I}^{\alpha+\tfrac{1}{2}}\\
	&= 2A \abs{I}^\alpha\\
	&= 2A \delta^\alpha(x,y),
\end{align*}
which has the desired form. The terms $II$ and $IV$ can be handled in the same way, let us deal with $II$. Take a generic term of the sum $II$, and use again Lemma~\ref{lemma:oscilationmI}.
\begin{align*}
	\abs{m_{I^x_l}(f)-m_{I^x_{l-1}}(f)} &= \abs{\frac{1}{\abs{I^x_l}}\int_{I^x_l}f - \frac{1}{\abs{I^x_{l-1}}}\left(\int_{I^x_l}f + \int_{I^x_{l-1}\setminus I^x_l} f\right)}\\
	&= \abs{\frac{1}{2}\frac{1}{\abs{I^x_l}}\int_{I^x_l}f - \frac{1}{2} \frac{1}{\abs{I^x_{l-1}\setminus I^x_l}}\int_{I^x_{l-1}\setminus I^x_l} f}\\
	&= \frac{1}{2} \abs{m_{I^x_l}(f)-m_{I^x_{l-1}\setminus I^x_l}(f)}\\
	&=\frac{1}{2} 2 \abs{I^x_{l-1}}^{-\tfrac{1}{2}} \abs{\proin{f}{h_{I^x_{l-1}}}}\\
	&\leq A  \abs{I^x_{l-1}}^{-\tfrac{1}{2}}  \abs{I^x_{l-1}}^{\alpha+\tfrac{1}{2}}\\
	&= A \abs{I^x_{l-1}}^{\alpha}\\
	&= A \frac{2^\alpha}{2^{\alpha l}}\abs{I}^\alpha.
\end{align*}
Then 
\begin{align*}
	II &= \sum_{l=2}^k\abs{m_{I^x_l}(f)-m_{I^x_{l-1}}(f)}\\
	&\leq  A 2^\alpha \abs{I}^\alpha \sum_{l\geq 2}\frac{1}{2^{\alpha l}}\\
	&= \frac{A}{2^\alpha -1} \delta^\alpha(x,y).
\end{align*}
Also
\begin{equation*}
	IV\leq \frac{A}{2^\alpha -1} \delta^\alpha(x,y).
\end{equation*}
Let $C_\alpha=\sup\{2,\frac{1}{2^\alpha -1}\}$, then
\begin{equation*}
	\abs{f(x)-f(y)} \leq \abs{f(x)- m_{I^x_k}(f)} + AC_\alpha \delta^\alpha(x,y) + \abs{f(y)-m_{I^y_k}(f)}
\end{equation*}
uniformly in $k$. Now, from the differentiation theorem, we have that for almost all $x$ and almost all $y$,
\begin{equation*}
	m_{I^x_k}(f) \longrightarrow f(x); \quad k\to\infty
\end{equation*}
and
\begin{equation*}
	m_{I^y_k}(f) \longrightarrow f(y); \quad k\to\infty.
\end{equation*}
Hence, for those values of $x$ and $y$ in $\mathbb{R}^+$ we get the result
\begin{equation*}
	\abs{f(x)-f(y)} \leq A C_\alpha \delta^{\alpha}(x,y).
\end{equation*}
\end{proof}

Propositions~\ref{propo:fLipdeltaimpliescoefficientsbounded} and \ref{propo:coefficientsboundedimpliesfLipdelta} prove Theorem~\ref{thm:mainresult}.


\providecommand{\bysame}{\leavevmode\hbox to3em{\hrulefill}\thinspace}
\providecommand{\MR}{\relax\ifhmode\unskip\space\fi MR }
\providecommand{\MRhref}[2]{%
	\href{http://www.ams.org/mathscinet-getitem?mr=#1}{#2}
}
\providecommand{\href}[2]{#2}

\section*{Acknowledgements} 
This work was supported by the Ministerio de Ciencia, Tecnolog\'ia e Innovaci\'on-MINCYT in Argentina: Consejo Nacional de Investigaciones Cient\'ificas y T\'ecnicas-CONICET (grant PIP-2021-2023-11220200101940CO) and Universidad Nacional del Litoral (grant CAI+D 50620190100070LI).


\medskip

\noindent \textit{Affiliation:}
	\textsc{Instituto de Matem\'{a}tica Aplicada del Litoral ``Dra. Eleonor Harboure'', CONICET, UNL.}

\smallskip
\noindent \textit{Address:} \textmd{IMAL, CCT CONICET Santa Fe, Predio ``Dr. Alberto Cassano'', Colectora Ruta Nac.~168 km~0, Paraje El Pozo, S3007ABA Santa Fe, Argentina.}

\smallskip

\noindent \textit{E-mail address:} \verb|haimar@santafe-conicet.gov.ar|; \verb|carias@santafe-conicet.gov.ar|;\\ \verb|ivanagomez@santafe-conicet.gov.ar|

\end{document}